\documentclass[11pt,lefteqn]{article}

\usepackage{amsmath,amsfonts,amsthm,amssymb}

\newcommand{\C}{\mathbb C}

\newcommand{\Aa}{\mathcal A_R}
\newcommand{\aR}{\partial a_R}

\newcommand*{\clos}[1]{\overline{#1}}

\newtheorem{theorem}{Theorem}
\newtheorem{lemma}[theorem]{Lemma}

\evensidemargin 0cm
\oddsidemargin 0cm
\topmargin -1cm
\textheight 23 true cm
\textwidth 16 true cm

\begin{document}
\date {\today}
\title{Spectral estimates in the quantum annulus  and in the numerical annulus}

\author{Michel Crouzeix, \footnote{Univ.\,Rennes, CNRS, IRMAR\,-\,UMR\,6625, F-35000 Rennes, France, email: michel.crouzeix@univ-rennes.fr}}

\maketitle

\begin{abstract} 
We improve previous estimates for matrices belonging to the quantum annulus or to the numerical annulus.
\end{abstract}

\paragraph{2000 Mathematical subject classifications\,:}47A25 ; 47A30

\noindent{\bf Keywords\,:}{ numerical range, spectral set}

\section{Introduction}
We consider the annulus $\Aa=\{z\in\C\,;R^{-1}<|z|<R\}$ with $R>1$ and a bounded linear operator $A$ on a Hilbert space. 
Recall that this annulus is called a $K$-spectral set for the operator $A$ if  the  spectrum of $A$ is contained in $\Aa$ and if 
\[
\|f(A)\|\leq K,\quad\textrm{for all rational functions $f$ bounded by $1$ in }\Aa.
\]
We denote by $K(A,R)$ the smallest of such $K$ and by $K(R)$ the supremum of these $K(A,R)$. 
The aim of this article is to show that for matrices $A\in\C^{d,d}$ we have
\begin{align}
K(A,R)&\leq 1{+}\sqrt{1{+}\frac{2}{R^2{+}1}},\quad\textrm{if $A$ satisfies $\|A\|<R$ and } \|A^{-1}\|<R,
\label{eq1}\\
K(A,R)&\leq 2{+}\sqrt{4{+}\frac{2}{R^2{+}1}},\quad\textrm{if $A$ satisfies $w(A)<R$ and } w(A^{-1})<R.\label{eq2}
\end{align}
The appellation quantum annulus of type $R$ indicates the class of operators on a Hilbert space satisfying $\|A\|<R$ and $\|A^{-1}\|<R$, 
while the name numerical annulus corresponds to operators
satisfying $w(A)<R$ and $w(A^{-1})<R$.
Here, we use the usual norm $\| \cdot\|$ on the Euclidean space $\C^d$ associated to the inner product 
$\langle \cdot,\cdot\rangle$, and for a matrix we use the operator norm $\|A\|=\max\{\|Au\|\,; u\in\C^d,\|u\|\leq 1\}$, and 
we denote by $w(A)=\max\{|\langle Au,u\rangle|\,; u\in\C^d,\|u\|\leq 1\}$ the numerical radius of $A$.

In the quantum case, an older (1974) estimate $K(R)\leq 2+\sqrt{\frac{R^2+1}{R^2-1}}$ is due to A.~Shields \cite{shields};
it has been improved first by V.~Paulsen and D.~Sinh \cite{PauSin} and subsequently in \cite{crzx2}; the first estimate bounded 
for $R$ close to 1, $K(R)\leq 2+\frac{R+1}{\sqrt{R^2+1}}$, has been obtained with C.~Badea and B.~Beckermann \cite{babecr} and then $K(R)\leq 1{+}\sqrt2$ with A.~Greenbaum \cite{crgr}.
Recently, J.E.~Pascoe \cite{pas} has obtained a bound close to 2 as $R$ tends to infinity.
G.~Tsikalas \cite{tsik} has shown that a bound valid for all these operators has to satisfy $K(R)\geq 2$.
In the annulus case, the work of Shields provides an estimate $K(R)\leq 4+2\sqrt{\frac{R^2+1}{R^2-1}}$;
some improvements have been given in \cite{crzx2}, a uniform bound $3{+}\sqrt{10}$ in \cite{crgr}; the latter bound was improved to $2{+}\sqrt{7}$ by M.~Jury and G. Tsikalas \cite{jutsi}. 

Our estimates \eqref{eq1}, \eqref{eq2} improve all the previous estimates; however the
previous estimates for $K(R)$ are valid for linear operators on any Hilbert spaces while
this article is limited to the finite dimension case. We do not know if our new estimates are valid in general, nor if they are valid in completely bounded form.

The organization of this paper is the following. In Section~2, we introduce the different tools used for getting the estimates: Cauchy transform of the conjugate, double layer potential, and the technique or extremal pairs (initiated in \cite{cgl}; see also \cite{sv,mmlro}) which requires a finite dimension context.
In Section~3, we provide estimates for the Cauchy transform of the conjugate in our annulus context and in Section~4 bounds 
for the double layer kernel.

\noindent {\bf Remark.}{ \it In order to prove \eqref{eq1} and \eqref{eq2} we may assume that the eigenvalues of $A$ are all distinct. Indeed, $d\times d$ 
matrices belonging to the quantum (resp. numerical) annulus with distinct eigenvalues are dense in the set of $d\times d$ matrices belonging to the quantum 
(resp. numerical) annulus.}

\section{Formalism}
We consider a rational function $f$, bounded in the annulus, and a square matrix $A$ with eigenvalues in $\Aa$. 
Then, the matrix $f(A)$ is well defined and we have the Cauchy formulae
\[
f(z)=\frac1{2\pi i}\int_{\partial \Aa}f(\sigma)\frac{d\sigma }{\sigma -z}, \quad f(A)=\frac1{2\pi i}\int_{\partial \Aa}f(\sigma)(\sigma I{-}A)^{-1}d\sigma .
\]
We will also use the Cauchy transforms of the complex conjugates of $f$,
\begin{equation}\label{eq3}
g(z):=C(\clos{f},z):=\frac1{2\pi i}\int_{\partial \Aa}\clos{f(\sigma)}\frac{d\sigma }{\sigma -z}, \quad g(A):=\frac1{2\pi i}\int_{\partial \Aa}\clos{f(\sigma)}(\sigma I{-}A)^{-1}d\sigma.
\end{equation}
In the previous formula, $\partial \Aa$ denotes the boundary of the annulus, which is the union of the two circles 
$\Gamma _R=\{R\,e^{i\theta }\,: \theta \in[0,2\pi ]\}$ and $\Gamma _r=\{r\,e^{-i\theta }\,: \theta \in[0,2\pi ]\}$; from now on, 
we use the notation $r=1/R$. We will use the following two representations for the points $ \sigma $ on this boundary:

 $\bullet\quad $on $\Gamma _R$,\quad $\sigma (s)=R\,e^{is/R}=R\,e^{i\theta }$ with $\theta =s/R$, $s\in[0,2\pi R])$,\smallskip

$\bullet\quad $on $\Gamma _r$,\quad $\sigma (s)=r\,e^{-is/r}=r\,e^{-i\theta }$ with $\theta =2\pi {+}s/r$, $s\in[-2\pi r,0)$.\smallskip

So, the boundary is parametrized either by the curvilinear abscissa $s\in \partial a_R:=[-2\pi r,2\pi R)$
or by the corresponding $\theta $. Then, for all functions $\varphi $, we will have
\[
\int_{\partial a_R}\varphi (\sigma (s))\,ds=\int_{0}^{2\pi }\varphi (R\,e^{i\theta })\,R\,d\theta +
\int_{0}^{2\pi }\varphi (r\,e^{-i\theta })\,r\,d\theta.
\]
Note that we have a counter clockwise orientation on $\Gamma_R$ and a clockwise on $\Gamma _r$. 
We introduce the transforms of $f$ by the double layer potential kernel,
\begin{equation}\label{eq4}
S(f,z):=\int_{\partial a_R}f(\sigma(s))\mu (\sigma(s),z)\,ds ,\quad S=S(f,A):=\int_{\partial a_R}f(\sigma (s))\mu (\sigma (s),A)\,ds.
\end{equation}
Here, $\mu $ denotes the kernel given by 
\begin{align}\label{eq5}
\mu (\sigma(s),z):=\frac1\pi \frac{d\arg(\sigma (s){-}z)}{ds}=\frac{1}{2\pi i}\Big(\frac{\sigma '(s)}{\sigma(s) -z}-\frac{\clos{\sigma '(s)}}{\clos{\sigma(s)} -\bar z}\Big),\\
\label{eq6}\mu (\sigma(s) ,A):=
\frac{1}{2\pi i}\big(\sigma '(s)(\sigma(s) I{-}A)^{-1}-\clos{\sigma '(s)}(\clos{\sigma(s)} I {-}A^*)^{-1}\big).
\end{align}
Note that $\mu ( \sigma (s) , z)$ is real-valued and $\mu ( \sigma (s), A)$ is self-adjoint. 
From these definitions, it is clear that (for $z\in\Aa$)
\begin{equation}\label{eq7}
f(z)+\clos{g(z)}=S(f,z)\quad\text{and}\quad S(f,A)=f(A)+g(A)^*.
\end{equation}
Note also that, if we choose the constant function $f=1$, then $g=1$, $f(A)=g(A)=I$,
\begin{equation}
\int_{\partial a_R}\mu (\sigma(s) ,z)\,ds=S(1,z)=2,\quad\text{if  }z\in\Omega\quad\text{and}\quad
\int_{\partial a_R}\mu (\sigma(s) ,A)\,ds=S(1,A)=2I. \label{S1zA}
\end{equation}

We have defined $K(A,R)$ as the smallest constant such that
\begin{equation}\label{eq9}
\|f(A)\|\leq K(A,R)\max\{|f(z)|\,;z\in\Aa\}|,
\end{equation}
for all rational functions. From Mergelyan's theorem, this inequality is also valid for all holomorphic functions in $\Aa$,  continuous up to the boundary. 
We assume that all eigenvalues of the matrix $A$ are interior to the annulus; 
so, from the compactness property of bounded holomorphic functions 
in $\Aa$, it is easily seen that there exists a function $f_0$ holomorphic in $\Aa$ satisfying $\|f_0(A)\|=K(A,R)$ and 
$\|f_0\|_{L^\infty(\Aa)}:=\sup\{|f_0(z)|,\,z\in\Aa\}=1$. Now, if $h$ is another holomorphic function whose   values and derivatives satisfy 
for all eigenvalues $\lambda _i$ of $A$ of multiplicity $r_i$
\[
h^{(j)}(\lambda _i)=f_0^{(j)}(\lambda _i),\quad \textrm{for }0\leq j\leq r_i{-}1,
\]
then, we have $h(A)=f_0(A)$; thus, from \eqref{eq9} we deduce $\|f_0\|_{L^\infty(\Aa)}\leq \|h\|_{L^\infty(\Aa)}$. Therefore, $f_0$ 
is the unique holomorphic function with minimal norm for the
values $f_0^{(j)}(\lambda _i)$ given. This implies that $f_0$ is an inner function of order $\leq d$;
thus, in particular, that $f_0\in C^\infty(\overline\Aa)$ (see \cite{gara, fish} for the distinct eigenvalues case, but I am convinced 
that this is also true in the general case \cite{crzx0}).

Now, we assume that $K(A,R)>1$ and we consider a unit vector $x_0$ such that $\|f_0(A)x_0\|=K(A,R)$; then, it is known from \cite{hol} that
$\langle f_0(A)x_0,x_0\rangle =0$. 
We set
\begin{align*}
g_0(z) = \frac{1}{2 \pi i} \int_{\partial {\mathcal A}_R} \overline{f_0( \sigma )} \frac{d \sigma}{\sigma - z}.
\end{align*}
Then, using \eqref{eq7}, we have for all $c_1$ and $c_2\in\C$,
\begin{align*}
\| f_0(A)x_0\|^2&=\langle f_0(A)x_0,f_0(A)x_0\rangle =
\langle f_0(A)x_0,S(f_0,A)x_0\rangle -\langle f_0(A)x_0,(g_0(A))^*x_0\rangle\\
&=\langle f_0(A)x_0,(S(f_0,A){-}c_1I)x_0\rangle -\langle f_0(A)(g_0(A){-}c_2I)x_0,x_0\rangle.
\end{align*}
Furthermore, \cite[Theorem\,4.5]{BG+}, there exists a probability measure $\mu $ such that 
\[
\langle f_0(A)(g_0(A){-}c_2I)x_0,x_0\rangle=\int_{\partial \Aa}(f_0(g_0{-}c_2))(\sigma )\,d_\mu \sigma;
\]
thus, we deduce
\[
\|f_0(A)\|^2\leq \|S(f_0,A){-}c_1I\|\,\|f_0(A)\|+\|g_0{-}c_2\|_{L^\infty(\Aa)},
\]
which shows that for all $c_1,\,c_2\in\C$
\begin{align}\label{eq10}
 &K(A,R)\leq \max\big(1,a+\sqrt{a^2+b}\big),\quad \textrm{with}\\[4pt]
 &a=\tfrac12\|S(f_0,A)-c_1\,I\| \textrm{   and  }b=\|g_0{-}c_2\|_{L^\infty(\Aa)}.\nonumber
\end{align}

\section{Estimate for the Cauchy transform of the conjugate of $f$.}
We define 
\[
\gamma (f)=\frac{1}{2\pi }\frac{R^2{-}1}{R^2{+}1}\int_{0}^{2\pi }\,f(e^{i\theta })\,d\theta.
\]
 Then, we have
\begin{lemma}\label{lem8}
For any rational function $f$ bounded by $1$ in ${\mathcal A}_R$, the associated function $g(z)$ defined by
\begin{equation}\label{defn_of_g}
g(z) = \frac{1}{2 \pi i} \int_{\partial {\mathcal A}_R} \overline{f( \sigma )} \frac{d \sigma}{\sigma - z} ,~~
z \in {\mathcal A}_R ,
\end{equation}
satisfies $| g(z) | \leq 1$ in ${\mathcal A}_R$  and $|g(z)-\overline{\gamma (f)}|\leq \frac{2}{R^2{+}1}$.
\end{lemma}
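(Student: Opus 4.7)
The plan is to realize $g$, on each concentric circle $\{|z|=\rho\}\subset \Aa$, as the angular convolution of a boundary value of $\clos f$ with an explicit nonnegative Poisson-type kernel on the unit circle; both inequalities then fall out of the total mass and the minimum value of that kernel.

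I would begin by computing the Laurent expansion of $g$. Writing $f(z)=\sum_{n\in\Z}a_nz^n$ on $\Aa$ and using $\bar\sigma=R^2/\sigma$ on $\Gamma_R$, $\bar\sigma=R^{-2}/\sigma$ on $\Gamma_r$, we have $\clos{f(\sigma)}=\sum_n\bar a_nR^{2n}\sigma^{-n}$ on $\Gamma_R$ and $\clos{f(\sigma)}=\sum_n\bar a_nR^{-2n}\sigma^{-n}$ on $\Gamma_r$. Substituting into \eqref{defn_of_g} and evaluating each elementary integral $\frac{1}{2\pi i}\int\sigma^{-n}(\sigma-z)^{-1}d\sigma$ by residues (for $n\ge1$ on $\Gamma_R$ the residues at $\sigma=0$ and $\sigma=z$ cancel; on $\Gamma_r$ only $\sigma=0$ is enclosed, and it contributes only for $n\ge1$), I arrive at
\[
g(z)=\sum_{n\in\Z}\bar a_{-n}R^{-2|n|}z^n=\bar a_0+\sum_{k\ge1}\bar a_{-k}R^{-2k}z^k+\sum_{m\ge1}\bar a_mR^{-2m}z^{-m}.
\]

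I would next turn this series into an integral representation on each circle $\{|z|=\rho\}$. For any $\rho_0\in(r,R)$, $\bar a_m=\frac{1}{2\pi}\int_0^{2\pi}\clos{f(\rho_0e^{i\psi})}\rho_0^{-m}e^{im\psi}d\psi$. Substituting and swapping sum with integral (justified by the absolute convergence implied by $|a_n|\le R^{-|n|}$) gives, for $z=\rho e^{i\phi}$,
\[
g(\rho e^{i\phi})=\frac1{2\pi}\int_0^{2\pi}\clos{f(\rho_0e^{i\psi})}\,Q\bigl((\rho\rho_0)^{-1}e^{i(\psi-\phi)}\bigr)\,d\psi,
\]
where $Q(w):=\sum_{n\in\Z}R^{-2|n|}w^n=\frac{R^2}{R^2-w}+\frac{1}{R^2w-1}$, holomorphic on $R^{-2}<|w|<R^2$. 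The decisive choice is $\rho_0=1/\rho$, which lies in $(r,R)$ and makes $\rho\rho_0=1$, placing the argument of $Q$ on the unit circle, where a short computation yields $Q(e^{i\theta})=\frac{R^4-1}{|R^2-e^{i\theta}|^2}$. Hence $Q\ge0$ on the unit circle with $\frac{1}{2\pi}\int_0^{2\pi}Q(e^{i\theta})d\theta=1$ (the constant Fourier coefficient) and minimum $\frac{R^2-1}{R^2+1}$ (attained at $\theta=\pi$, where $|R^2-e^{i\theta}|^2=(R^2+1)^2$).

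Both bounds now follow. Since $|f|\le1$ on the circle $\{|z|=1/\rho\}\subset\Aa$ and $Q(e^{i\theta})\frac{d\theta}{2\pi}$ is a probability measure, the triangle inequality gives $|g(\rho e^{i\phi})|\le1$. For the refinement, noting that $\clos{\gamma(f)}=\frac{R^2-1}{R^2+1}\bar a_0$ with $\bar a_0=\frac{1}{2\pi}\int_0^{2\pi}\clos{f(\rho^{-1}e^{i\psi})}d\psi$, we obtain
\[
g(\rho e^{i\phi})-\clos{\gamma(f)}=\frac1{2\pi}\int_0^{2\pi}\clos{f(\rho^{-1}e^{i\psi})}\Bigl[Q(e^{i(\psi-\phi)})-\tfrac{R^2-1}{R^2+1}\Bigr]d\psi,
\]
and the bracket is nonnegative with angular mean $\frac{2}{R^2+1}$, so $|g(z)-\clos{\gamma(f)}|\le\frac{2}{R^2+1}$ as required. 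The main obstacle is the right choice of parameter in the convolution formula: for a generic $\rho_0$ the kernel $Q((\rho\rho_0)^{-1}e^{i\theta})$ is complex-valued and the triangle inequality yields nothing useful; exploiting the $z\mapsto1/z$ symmetry of $\Aa$ through $\rho_0=1/\rho$ is exactly what produces a real, nonnegative kernel and makes both bounds transparent.
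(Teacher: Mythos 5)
Your proof is correct, but it takes a genuinely different route from the paper. The paper first uses the symmetries $f\mapsto f(ze^{i\theta})$ and $f\mapsto f(1/z)$ together with the maximum principle to reduce both inequalities to estimates at the single boundary point $z=r$; it then manipulates the double layer kernel $\mu(\sigma,r)$ on $\partial\Aa$, using $\int_{\partial\Aa}f(\sigma)\sigma^{-1}d\sigma=0$ to eliminate the $\Gamma_r$ contribution and arrive at an integral over $\Gamma_R$ alone against the positive density $\frac{1}{2\pi R}\frac{R^4-1}{R^4-2R^2\cos\theta+1}$ of total mass $1$ and minimum $\frac1{2\pi R}\frac{R^2-1}{R^2+1}$. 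You instead compute the Laurent coefficients of $g$ directly ($g(z)=\sum_n \bar a_{-n}R^{-2|n|}z^n$, which is right, including the orientation sign on $\Gamma_r$) and realize $g$ on the circle $|z|=\rho$ as a convolution of $\clos{f}$ on the reciprocal circle $|w|=1/\rho$ with the kernel $Q(e^{i\theta})=\frac{R^4-1}{|R^2-e^{i\theta}|^2}$; your kernel is exactly the paper's density (up to the factor $2\pi R$ absorbed by $ds=R\,d\theta$), so the two arguments hinge on the same positivity phenomenon. What your version buys is that it works directly at every interior point, dispensing with the symmetry reduction and the maximum principle, and it makes the structure (a probability kernel with minimum $\frac{R^2-1}{R^2+1}$, hence oscillation of mass $\frac{2}{R^2+1}$) completely transparent; what the paper's version buys is that it stays entirely with the boundary-integral formalism of $\mu(\sigma,z)$, which is the form that carries over to the operator-valued estimates of Section~4. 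The only points worth spelling out in a final write-up are the justification of the sum--integral interchange (your bound $|a_n|\le R^{-|n|}$ does give uniform absolute convergence since $\rho\rho_0=1$) and the remark that $1/\rho\in(r,R)$ whenever $\rho\in(r,R)$, so the reciprocal circle really lies in $\Aa$.
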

\begin{proof}
Recall that $g$ has a continuous extension to the boundary given for $\sigma_0 \in \partial {\mathcal A}_R$ by
\[
g( \sigma_0 ) = \int_{\partial {\mathcal A}_R} \overline{f( \sigma(s) )} \mu( \sigma(s) , \sigma_0 )\,ds ,~~
\mbox{ with } \mu ( \sigma (s) , z ) = \frac{1}{2 \pi i} \left( \frac{\sigma' (s)}{\sigma (s) -z} -
\frac{\overline{\sigma' (s)}}{\overline{\sigma(s)} - \bar{z}} \right) ,
\]
where $s$ denotes arclength on $\partial {\mathcal A}_R$.  Let $f_{\theta} (z) = f( z e^{i \theta} )$,
$g_{\theta} (z) = g( z e^{i \theta} )$, $\tilde{f} (z) = f(1/z)$, and $\tilde{g} (z) = g(1/z)$.
Then, it is easily verified that if we replace $f$ by $f_{\theta}$ (resp.~by $\tilde{f}$), the 
associated function $g$ in (\ref{defn_of_g}) is replaced by $g_{\theta}$ (resp.~by $\tilde{g}$).
From this and the maximum principle, it suffices to show that $| f | \leq 1$ in ${\mathcal A}_R$
implies $| g(r) | \leq 1$.
Note that
\[
\mu ( \sigma , r) = - \frac{R}{2 \pi} ,~~\mbox{ if } \sigma = r e^{-i \theta} ,
\]
\[
\mu ( \sigma , r ) = \frac{R}{\pi} \frac{R^2 - \cos \theta}{R^4 - 2 R^2 \cos \theta + 1} ,~~\mbox{ if } 
\sigma = R e^{i \theta} .
\]
On $\Gamma_r$, we write $\sigma (s) = r e^{-i \theta (s)}$, where $s = r \theta (s)$, $ds = r d \theta$,
$d \sigma = -i \sigma d \theta = -i \sigma R ds$.  Then,
\[
\overline{g(r)} = \int_{\Gamma_R} \!\!f( \sigma(s) ) \mu ( \sigma(s) , r )\,ds {-} \frac{R}{2 \pi} 
\int_{\Gamma_r}\!\! f( \sigma(s) )\,ds =
\int_{\Gamma_R}\!\! f( \sigma(s) ) \mu ( \sigma(s) , r )\,ds {+} \frac{1}{2 \pi i} \int_{\Gamma_r}
\!\!\frac{f( \sigma )}{\sigma}\,d \sigma .
\]
On $\Gamma_R$, writing $\sigma (s) = R e^{-i \theta (s)}$, where $s = R \theta (s)$, $ds = R d \theta$,
$d \sigma = i \sigma d \theta = ( i \sigma / R) ds$, and
using the fact that $\int_{\partial {\mathcal A}_R} \frac{f( \sigma )}{\sigma}\,d \sigma = 0$,
we obtain
\[
\overline{g(r)} = \int_{\Gamma_R}\!\! f( \sigma(s) ) \mu ( \sigma(s) , r )\,ds - \frac{1}{2 \pi i} \int_{\Gamma_R}\!\!
\frac{f( \sigma )}{\sigma}\,d \sigma =
\int_{\Gamma_R}\!\! f( \sigma(s) ) \left( \mu ( \sigma(s) , r ) - \frac{1}{2 \pi R} \right)\,ds .
\]
Finally, we observe
that if $\sigma = R e^{i \theta}$, then $\mu ( \sigma , r ) - \frac{1}{2 \pi R} =
\frac{1}{2 \pi R} \frac{R^4 - 1}{R^4 - 2 R^2 \cos \theta + 1} > 0$, which implies
\[
| g(r) | \leq \int_{\Gamma_R} \left( \mu ( \sigma(s) , r ) - \frac{1}{2 \pi R} \right)\,ds = 1 .
\]
Similarly, for proving that $|g(z){-}\gamma (f)|\leq \frac{2}{R^2{+}1}$ for all $z\in\Aa$ it suffices to show that 
$|g(r){-}\gamma (f)|\leq \frac{2}{R^2{+}1}$. For that, we observe that
\[
\gamma (f)=\frac{1}{2\pi }\frac{R^2{-}1}{R^2{+}1}\int_{0}^{2\pi }\,f(e^{i\theta })\,d\theta=
\frac{1}{2\pi R}\frac{R^2{-}1}{R^2{+}1}\int_{\Gamma _R }f(\sigma (s))\,ds,
\]
since $f$ is holomorphic in $\Aa$; therefore,
\begin{align*}
\overline{g(r)}{-}\gamma (f) &=\int_{\Gamma_R}\!\! f( \sigma(s) ) \Big(\mu ( \sigma(s) , r ){-} \frac{1}{2\pi R}{-}\frac{1}{2\pi R}\frac{R^2{-}1}{R^2{+}1}\Big)\,ds  \\
&=\frac{1}{\pi }\frac{R^2{-}1}{R^2{+}1}\int_0^{2\pi }f(R\,e^{i\theta })\frac{R^2(1+\cos\theta )}{R^4 {-} 2 R^2 \cos \theta {+} 1}\,d\theta,\quad\textrm{whence}
\end{align*}
\begin{align*}
|\overline{g(r)}{-}\gamma (f)| &\leq
\frac{1}{\pi }\frac{R^2{-}1}{R^2{+}1}\int_0^{2\pi }\frac{R^2(1+\cos\theta)}{R^4 {-} 2 R^2 \cos \theta {+} 1}\,d\theta =\frac{2}{R^2{+}1}.
\end{align*}

\end{proof}

\section{Estimates for the double layer potential kernel.}
We first assume that we are in the quantum case $\|A\|<R$ and $\|A^{-1}\|<R$. Then, we introduce
 the self-adjoint matrix $\nu(\sigma ,A)=\mu (\sigma ,A){-}\frac1{2\pi i}\frac{\sigma '}{\sigma }I$. If $\sigma \in\Gamma _{R}$, we may write $\sigma =Re^{i\theta }$, 
 $s=R\,\theta $; thus,
\begin{align*}
2\pi R\,\nu(\sigma ,A)&=Re^{i\theta} (Re^{i\theta}I{-}A)^{-1}{+}Re^{-i\theta} (Re^{-i\theta}I{-}A^*)^{-1}-I\\&=(Re^{i\theta}I{-}A)^{-1}(R^2I{-}AA^*)(Re^{-i\theta}I{-}A^*)^{-1}\geq 0,
\end{align*}
and if $\sigma=re^{-i\theta } \in\Gamma _r$, with $B=A^{-1}$,
\begin{align*}
2\pi r\,\nu(\sigma ,A)&=-re^{-i\theta} (re^{-i\theta}I{-}A)^{-1}{-}re^{i\theta} (re^{i\theta}I{-}A^*)^{-1}+I
\\&=(re^{-i\theta}I{-}A)^{-1}(AA^*{-}r^2I)(re^{i\theta}I{-}A^*)^{-1},\\
&=r^2(re^{-i\theta}I{-}A)^{-1}A(R^2{-}BB^*)A^*(re^{i\theta}I{-}A^*)^{-1}\geq 0.
\end{align*}
 Therefore, we have $\nu(\sigma ,A)\geq 0$ for all $\sigma \in\partial \Aa$.
Now, we notice that $\int_{\aR }(\mu (\sigma ,A){-}\nu(\sigma ,A)\,ds=\frac{1}{2\pi i}\int_{\partial \Aa}\frac{d\sigma }\sigma \, I=0$;  thus, if $f$ 
is a rational function bounded by 1 in $\Aa$, we obtain
\[
\|S(f,A)\|=\Big\|\int_{\partial \omega}f(\sigma )\nu (\sigma(s) ,A)\,ds\Big\|
\leq  \Big\|\int_{\aR}\nu(\sigma(s) ,A)\,ds\Big\|=\Big\|\int_{\aR}\mu(\sigma(s) ,A)\,ds\Big\|=2,
\]
where we have used the fact that $S(1,A) = 2I$ from (\ref{S1zA}). This shows that
\begin{align*}
\|S(f,A)\|\leq 2, \textrm {  for all matrices $A$ with $\|A\|<R$ and } \|A^{-1}\|<R. 
\end{align*}
Therefore, using \eqref{eq10} with $c_1=0$ and $c_2=\gamma (f)$, we get $a\leq 1$ and $b\leq \frac2{R+1}$; thus,
\[
K(A,R)\leq 1+\sqrt{1+\frac2{R+1}}.
\]
\bigskip

We now consider the numerical case $w(A)< R$ and $w(A^{-1})<R$, and we set
$\nu(\sigma ,A)=\mu (\sigma ,A)$ if $\sigma \in\Gamma _R$ and $\nu(\sigma ,A)=\mu (\sigma ,A){-}\frac1{\pi i}\frac{\sigma '}{\sigma }I$ if 
$\sigma \in\Gamma _r$;  then, we have $\nu(\sigma ,A)\geq 0$ on $\Gamma _R$ since  $w(A)\leq R$. 
On $\Gamma _r$,
\begin{align*}
2\pi r\,\nu(\sigma ,A)&=-re^{-i\theta} (re^{-i\theta}I{-}A)^{-1}{-}re^{i\theta} (re^{i\theta}I{-}A^*)^{-1}+2\,I
\\&=(re^{-i\theta}I{-}A)^{-1}(2AA^*{-}re^{i\theta}A{-}re^{-i\theta})(re^{i\theta }{-}A^*)^{-1},\\
&=(re^{-i\theta}I{-}A)^{-1}A\big(2I{-}r(e^{i\theta }B{+}e^{-i\theta }B^*)\big)A^*(re^{i\theta}I{-}A^*)^{-1}\geq 0,
\end{align*}
since $w(B)<R$. Hence, with $\gamma _1(f)=\frac1\pi \int_0^{2\pi }f(\sigma (re^{-i\theta }))\,d\theta $,
\begin{align*}
\|S(f,A){-}\gamma _1(f)\,I\|&=\Big\|\int_{\partial \omega}f(\sigma )\nu (\sigma(s) ,A)\,ds\Big\|
\leq  \Big\|\int_{\aR}\nu(\sigma(s) ,A)\,ds\Big\|\\
&\leq \Big\|\int_{\aR}\mu(\sigma(s) ,A)\,ds\Big\|+\frac1\pi \int_{0}^{2\pi }ds=4.
\end{align*}
Therefore, using \eqref{eq10} with $c_1=\gamma _1(f)$ and $c_2=\gamma (f)$, we get $a\leq 2$ and $b\leq \frac2{R+1}$;  
thus, in the numerical case, we obtain
\[
K(A,R)\leq 2+\sqrt{4+\frac2{R+1}}.
\]

\end{document}